\numberwithin{equation}{section}
\numberwithin{figure}{section}
\theoremstyle{plain}
\newtheorem{thm}{\protect\theoremname}
  \theoremstyle{remark}
  \newtheorem*{rem*}{\protect\remarkname}
  \newtheorem{rem}{\protect\remarkname}
  \theoremstyle{plain}
  \newtheorem{cor}[thm]{\protect\corollaryname}
\DeclareMathOperator{\sech}{sech}
  \providecommand{\corollaryname}{Corollary}
  \providecommand{\remarkname}{Remark}
\providecommand{\theoremname}{Theorem}
\begin{document}

\title[Higher-order Bernoulli and Euler Polynomials: Random Walk]{Connection Coefficients for Higher-order Bernoulli and Euler Polynomials: A Random Walk Approach}

\author{Lin Jiu}
\address{Department of Mathematics and Statistics\\
                Dalhousie University\\
                Halifax, NS\\
                B3H 4R2, Canada}
\email{Lin.Jiu@dal.ca}

\author{Christophe Vignat}
\address{Department of Mathematics\\
               Tulane University\\
               New Orleans, LA\\
               70118, USA}
\email{cvignat@tulane.edu}

\begin{abstract}
We consider the use of random walks as an approach to obtain connection coefficients for higher-order Bernoulli and Euler polynomials. In particular, we
consider the cases of a $1$-dimensional linear reflected Brownian
motion and of a $3$-dimensional Bessel process. Considering the successive
hitting times of two, three, and four fixed levels by these random walks
yields non-trivial identities that involve higher-order Bernoulli and
Euler polynomials.
\end{abstract}

\maketitle

\section{Introduction}

Different types of random walks have been studied in the literature,
together with their connections to different fields of mathematics
and physics; for a modern introduction, see for example \cite{Lawler}.
In this paper, we focus on two specific random walks: the $1$-dimensional
linear reflected Brownian motion and the $3$-dimensional Bessel process.
It would seem that there is no relation between these processes and special functions such as Bernoulli and Euler polynomials that appear mostly in number theory and combinatorics. However,
we will show how the study of the hitting times of these two processes
allows us to derive non-elementary identities for higher-order Bernoulli
and Euler polynomials, denoted by $B_{n}^{\left(p\right)}\left(x\right)$
and $E_{n}^{\left(p\right)}\left(x\right)$, respectively. 

These polynomials are defined through their generating functions as
\begin{equation}
\left(\frac{t}{e^{t}-1}\right)^{p}e^{xt}=\sum_{n=0}^{\infty}B_{n}^{\left(p\right)}\left(x\right)\frac{t^{n}}{n!}\ \ \ \text{and}\ \ \ \left(\frac{2}{e^{t}+1}\right)^{p}e^{xt}=\sum_{n=0}^{\infty}E_{n}^{\left(p\right)}\left(x\right)\frac{t^{n}}{n!}.\label{eq:GF}
\end{equation}
The special case $p=1$ yields the usual Bernoulli and Euler polynomials:
$B_{n}^{\left(1\right)}\left(x\right)=B_{n}\left(x\right)$ and $E_{n}^{\left(1\right)}\left(x\right)=E_{n}\left(x\right)$;
in addition, Bernoulli and Euler numbers are the special evaluations
$B_{n}=B_{n}\left(0\right)$ and $E_{n}=2^{n}E_{n}\left(1/2\right)$. 

One of the key tools of this article is the interpretation
of the polynomials $B_{n}^{\left(p\right)}\left(x\right)$ and $E_{n}^{\left(p\right)}\left(x\right)$
as probabilistic moments of certain random variables that are related to the hitting times of some random walks,
which will be introduced in Section \ref{sec:Pre}. 

This study arises from early work on the higher-order Euler polynomials. In a previous article \cite[eq.~3.8, p.~781]{Euler},
we obtained the following expansion for the usual Euler polynomials
as a linear combination of higher-order Euler polynomials: for any
positive integer $N$, 
\begin{equation}
E_{n}(x)=\frac{1}{N^{n}}\sum_{l=N}^{\infty}p_{l}^{(N)}E_{n}^{(l)}\left(\frac{l-N}{2}+Nx\right),\label{eq:Euler}
\end{equation}
where the positive coefficients $p_{l}^{(N)}$ are defined by the
generating function 
\[
\frac{1}{T_{N}(1/t)}=\sum_{l=0}^{\infty}p_{l}^{(N)}t^{l},
\]
via the $N$-th Chebychev polynomial $T_{N}(t)$. These coefficients $p_{l}^{(N)}$ also appear as transition probabilities
in the context of a random walk over a finite number of sites \cite[Note 4.8, p.~787]{Euler}.
This characterization provides an alternate interpretation of \eqref{eq:Euler}
in  terms of a Markov process.

For the two random walks considered in this paper, we shall
focus on their consecutive hitting times: namely, for a process starting
from the origin at time $t=0$, we consider the first epochs - called hitting times - at which the process reaches successive
levels $0=a_{0}<a_{1}<a_{2}<\cdots<a_{N}$. In particular, we study
some special cases for $N=2$, $3$, and $4$. From the moment generating
functions of these hitting times, we derive identities involving $B_{n}^{\left(p\right)}\left(x\right)$
and $E_{n}^{\left(p\right)}\left(x\right)$ which will be presented
in Section \ref{sec:OneDim} and Section \ref{sec:ThreeDim}. 

\section{\label{sec:Pre}Probabilistic Preliminaries and Principle }

\subsection{The symbols $\mathcal{B}$, $\mathcal{E}$ and $\mathcal{U}$}

We shall fully make use of the classical umbral symbols $\mathcal{B}$,
$\mathcal{E}$, and $\mathcal{U}$, defined as follows - for an introduction
to the classical umbral calculus, see for example \cite{Gessel,umbral}. The Bernoulli
symbol $\mathcal{B}$ satisfies the evaluation rule 
\begin{equation}
\left(\mathcal{B}+x\right)^{n}=B_{n}\left(x\right).\label{eq:Bernoulli polynomial}
\end{equation}
Equivalently, this symbol $\mathcal{B}$
can be interpreted as a random variable \cite[Thm.~2.3, p.~384]{Zagier1}:
if $L_{B}$ is a random variable distributed according to the square
hyperbolic secant density $p_{B}\left(t\right)=\pi\sech^{2}\left(\pi t\right)/2$, 
then, for any suitable function $f$, 
\[
f\left(x+\mathcal{B}\right)=\mathbb{E}\left[f\left(x+iL_{B}-\frac{1}{2}\right)\right]=\int_{\mathbb{R}}f\left(x+it-\frac{1}{2}\right)p_{B}\left(t\right)\mathrm{d}t.
\]
In particular, choosing $f\left(x\right)=x^{n}$ produces the Bernoulli
polynomial (\ref{eq:Bernoulli polynomial}, so that $\mathcal{B}$ coincides with the  random variable $iL_B -1/2$. More generally, the $p$-th order
Bernoulli polynomial can be expressed as 
\[
B_{n}^{\left(p\right)}\left(x\right)=\left(x+\mathcal{B}_{1}+\cdots+\mathcal{B}_{p}\right)^{n},
\]
where $\left(\mathcal{B}_{i}\right)_{i=1}^{p}$ is a set of $p$ independent
umbral symbols (or random variables) that satisfy the two following
evaluation rules: 

1: if $\mathcal{B}_{1}$ and $\mathcal{B}_{2}$ are independent symbols
then
\[
\mathcal{B}_{1}^{n}\mathcal{B}_{2}^{m}=B_{n}B_{m};
\]

2: if $\mathcal{B}_{1}=\mathcal{B}_{2}$,
\[
\mathcal{B}_{1}^{n}\mathcal{B}_{2}^{m}=\mathcal{B}_{1}^{n+m}=B_{n+m}.
\]
For simplicity, we shall denote the sum of $p$ independent Bernoulli
symbols as $\mathcal{B}^{\left(p\right)}=\mathcal{B}_{1}+\cdots+\mathcal{B}_{p}$. 

From the generating function \eqref{eq:GF}, we deduce that
\[
e^{\mathcal{B}t}=\frac{t}{e^{t}-1},\ \ \ e^{t\left(2\mathcal{B}+1\right)}=\frac{t}{\sinh t},\ \ \ \text{and}\ \ \ e^{t\left(2\mathcal{B}^{\left(p\right)}+p\right)}=\left(\frac{t}{\sinh t}\right)^{p}.
\]
Similarly, we let $L_{E}$ be a random variable distributed according
to the hyperbolic secant density $p_{E}\left(t\right)=\sech\left(\pi t\right)$, 
and define the Euler symbol $\mathcal{E}$ by
\[
f\left(x+\mathcal{E}\right)=\mathbb{E}\left[f\left(x+ iL_{E}-\frac{1}{2}\right)\right]=\int_{\mathbb{R}}f\left(x+ it-\frac{1}{2}\right)p_{E}\left(t\right)\mathrm{d}t.
\]
Then, denoting $\mathcal{E}^{\left(p\right)}=\mathcal{E}_{1}+\cdots+\mathcal{E}_{p}$,
the sum of $p$ independent Euler symbols, the Euler polynomial of
order $p$ is expressed as
\[
E_{n}^{\left(p\right)}\left(x\right)=\left[x+\mathcal{E}^{\left(p\right)}\right]^{n}.
\]
Also, we have
\[
e^{t\mathcal{E}}=\frac{2}{e^{t}+1},\ \ \ e^{t\left(2\mathcal{E}+1\right)}=\sech t ,\ \ \ \text{and}\ \ \ e^{t\left(2\mathcal{E}^{\left(p\right)}+p\right)}=\sech^{p} t .
\]

From the generating function
\[
e^{2\mathcal{B}t}=\frac{2t}{e^{2t}-1}=\frac{t}{e^{t}-1}\cdot\frac{2}{e^{t}+1}=e^{t\left(\mathcal{B}+\mathcal{E}\right)},
\]
we deduce that the symbols $\mathcal{B}$ and $\mathcal{E}$ satisfy the rule
\begin{equation}
2\mathcal{B}=\mathcal{B}+\mathcal{E},\label{eq:2BE}
\end{equation}
in the sense that for any suitable function,
\[
f\left(x+2\mathcal{B}\right)=f\left(x+\mathcal{B}+\mathcal{E}\right).
\]

The last useful symbol is the uniform symbol $\mathcal{U}$ with the
evaluation rule
\[
\mathcal{U}^{n}=\frac{1}{n+1}.
\]
It can be interpreted as a random variable $U$ uniformly distributed
over the interval $\left[0,1\right]$, and for any suitable function
$f$, 
\[
f\left(x+\mathcal{U}\right)=\mathbb{E}\left[f\left(x+U\right)\right]=\int_{0}^{1}f\left(x+t\right)\mathrm{d}t.
\]
Thus, we have in terms of generating functions
\[
e^{t\mathcal{U}}=\sum_{n=0}^{\infty}\frac{t^{n}}{\left(n+1\right)!}=\frac{e^{t}-1}{t},\thinspace\thinspace e^{t\left(2\mathcal{U}-1\right)}=\frac{\sinh t }{t},\thinspace\thinspace\text{and}\thinspace\thinspace e^{t\left(2\mathcal{U}^{\left(p\right)}-p\right)}=\left(\frac{\sinh t }{t}\right)^{p}.
\]
where $\mathcal{U}^{\left(p\right)}=\mathcal{U}_{1}+\cdots+\mathcal{U}_{p}$
denotes the sum of $p$ independent uniform symbols. 

An important link between the Bernoulli symbol $\mathcal{B}$ and
the uniform symbol $\mathbf{\mathcal{U}}$ is deduced from the identity
\[
e^{t\left(\mathcal{U}+\mathcal{B}\right)}=e^{t\mathcal{U}}e^{t\mathcal{B}}=\frac{t}{e^{t}-1}\cdot\frac{e^{t}-1}{t}=1;
\]
this shows that, for any suitable function $f$,  
\begin{equation}
f\left(x+\mathcal{B}+\mathcal{U}\right)=f\left(x\right), \label{eq:cancel}
\end{equation}
so that the actions of these two symbols cancel each other. 

In what follows, we will use independent copies of Bernoulli, Euler
and uniform symbols that, for example in the uniform case, they will be
denoted by $\mathcal{U},\thinspace\mathcal{U}',\dots$ and $\mathcal{U}{}^{\left(p\right)},\mathcal{U}'^{\left(p\right)},\dots$ in order to be distinguished.

\subsection{\label{subsec:loops}Level sites with $1$ loop and $2$ loops }

The general setting of this paper is thus a random walk starting from
the origin (in $\mathbb{R}$ or $\mathbb{R}^{3}$) and hitting some
defined levels -either some points on the line or some sphere in the
3-dimensional space, called sites. 
Looping back and forth, each random
walk can reach each site multiple times, and we are interested in
the first time the process reaches each of these sites. Before we derive identities,
we shall first consider the contribution of loop(s) to the hitting
times of these sites. 

Consider one possible loop between sites $a$ and $b$, with $a<b$.
Let $\phi_{a\rightarrow b}$ be the moment generating function of
the hitting time of site $b$ staring from site $a$, without hitting
any other site if any; and also let $\phi_{b\rightarrow a}$ be the
counterpart from $b$ to $a$ of $\phi_{a\rightarrow b}$. Let us further denote by
\[
I_{a,b}=\phi_{a\rightarrow b}\phi_{b\rightarrow a};
\]
the random walk can loop an arbitrary number of times $k\ge0$ between
sites $a$ and $b$ (without visiting any other sites) so
that the overall contribution of these visits to the generating function
is
\begin{equation}
\sum_{k=0}^{\infty}I_{a,b}^{k}=\frac{1}{1-I_{a,b}}.\label{eq:OneLoop}
\end{equation}

Next, consider one possible loop between $a$ and $b$, and another
between $c$ and $d$, with $a<b\leq c<d$. Using similar notations
as above, we define 
\[
I_{a,b}=\phi_{a\rightarrow b}\phi_{b\rightarrow a}\ \ \ \text{and}\ \ \ I_{c,d}=\phi_{c\rightarrow d}\phi_{d\rightarrow c}.
\]
The possible contributions are as follows:
\begin{itemize}
\item $k$ loops between sites $a$ and $b$ followed by $l$ loops between
sites $c$ and $d$, with $k,l=0,1,\ldots$, contributing
\[
\sum_{k.l\ge0}I_{a,b}^{k}I_{c,d}^{l}=\frac{1}{1-I_{a,b}}\cdot\frac{1}{1-I_{c,d}};
\]
\item $k_{1}$ loops between sites $a$ and $b$ followed by $l_{1}$ loops
between sites $c$ and $d$, then followed by $k_{2}$ loops between
sites $a$ and $b$ and finally $l_{2}$ loops between sites $c$
and $d$, with $k_{1},\ l_{2}$ nonnegative and $k_{2}$, $l_{1}$
positive, contributing
\[
\sum_{k_{1}.l_{2}=0,k_{2},l_{1}=1}^{\infty}I_{a,b}^{k_{1}}I_{c,d}^{l_{1}}I_{a,b}^{k_{2}}I_{c.d}^{l_{2}}=\frac{I_{a,b}I_{c,d}}{\left(1-I_{a,b}\right)^{2}\left(1-I_{c,d}\right)^{2}};
\]
\item the general term will consist of $k_{1}$ loops between sites $a$ and $b$
followed by $l_{1}$ loops between sites $c$ and $d$ and so on followed
by $k_{n}$ loops between sites $a$ and $b$ and $l_{n}$ loops between
sites $c$ and $d$ , with $k_{1}$, $l_{n}$ nonnegative and the
other indices being positive, overall contributing
\[
\frac{I_{a,b}^{n-1}I_{c,d}^{n-1}}{\left(1-I_{a,b}\right)^{n}\left(1-I_{c,d}\right)^{n}}
\]
to the generating function.
\end{itemize}
Therefore, the overall contribution of all possible loops over sites
$a$ and $b$ followed by loops over sites $c$ and $d$ is
\begin{equation}
\sum_{n=1}^{\infty}\frac{I_{a,b}^{n-1}I_{c,d}^{n-1}}{\left(1-I_{a,b}\right)^{n}\left(1-I_{c,d}\right)^{n}}=\frac{1}{1-\left(I_{a,b}+I_{c,d}\right)}=\sum_{k=0}^{\infty}\left(I_{a,b}+I_{c,d}\right)^{k}.\label{eq:TwoLoops}
\end{equation}

\section{\label{sec:OneDim}One-Dimensional Reflected Brownian Motion}

\subsection{Introduction}

Consider the $1$-dimensional reflected Brownian motion on $\mathbb{R}_{+}$.
For simplicity, we let
\begin{itemize}
\item $\phi_{r\to s}\left(z\right)$ be the generating function of the hitting
time of site $s$ starting from site $r$;
\item $\phi_{r\to s\vert\bcancel{t}}\left(z\right)$ be the generating
function of the hitting time of site $s$ starting from site $r$
without reaching site $t$. 
\end{itemize}
In the case of three consecutive sites $a$, $b$ and $c$ with $0<a<b<c$, 
the generating functions of the corresponding hitting times can be
found in \cite[p.~198 and p.~355]{handbook}: with $w=\sqrt{2z}$, 
\begin{align}
\phi_{a\rightarrow b}\left(z\right) & =\frac{\cosh\left(aw\right)}{\cosh\left(bw\right)}\label{eq:1dim1}\\
\phi_{b\to a\vert\bcancel{c}}\left(z\right) & =\frac{\sinh\left(\left(c-b\right)w\right)}{\sinh\left(\left(c-a\right)w\right)}\label{eq:1dim2}\\
\phi_{b\to c\vert\bcancel{a}}\left(z\right) & =\frac{\sinh\left(\left(b-a\right)w\right)}{\sinh\left(\left(c-a\right)w\right)}\label{eq:1dim3}
\end{align}
Let us now label $\left(N+1\right)$
sites on the real positive line as $0=a_{0}<a_{1}<a_{2}<\cdots<a_{N}$.

\subsection{The case of $3$ sites}

We consider here $3$ sites $0=a_{0}<a_{1}<a_{2}$; the study of
hitting times yields the following identity.
\begin{thm}
Let $0<a_{1}<a_{2}$, then we have, for any positive integer $n$,
the expansion of Euler polynomials as a convex combination of higher-order
Bernoulli polynomials as 
\begin{align}
&E_{n}\left(\frac{x}{2a_{2}}+\frac{3}{2}-2\frac{a_{1}}{a_{2}}\right)-E_{n}\left(\frac{x}{2a_{2}}+\frac{1}{2}\right)\nonumber\\
=&\left(n+1\right)\left(1-2\frac{a_{1}}{a_{2}}\right)\frac{2^{n}a_{1}^{n}}{a_{2}^{n}}\sum_{k=0}^{\infty}p_{k}B_{n}^{\left(k+1\right)}\left(\frac{x}{4a_{1}}+\frac{a_{2}}{4a_{1}}+\frac{k}{2}\right),\label{eq:EnBnk}
\end{align}
where the coefficients
\[
p_{k}=\frac{a_{1}}{a_{2}}\left(1-\frac{a_{1}}{a_{2}}\right)^{k}
\]
are the probability weights of a geometric distribution with parameter
$a_1/a_2$.
\end{thm}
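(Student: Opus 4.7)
The plan is to derive identity \eqref{eq:EnBnk} by comparing, coefficient by coefficient in the Laplace variable $w=\sqrt{2z}$, the direct formula $\sech(a_2 w)$ for the hitting-time generating function $\phi_{0\to a_2}$ with the loop decomposition obtained from \eqref{eq:OneLoop} together with the formulas \eqref{eq:1dim1}--\eqref{eq:1dim3}.

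First, I would set up the three-site decomposition. By the strong Markov property applied at the first visit to $a_1$, the trajectory from $0$ to $a_2$ factors into reaching $a_1$, performing $k\geq 0$ round-trip excursions from $a_1$ back to $0$, and finally escaping from $a_1$ to $a_2$ without returning. Combining \eqref{eq:OneLoop} with \eqref{eq:1dim1}--\eqref{eq:1dim3} (and $\phi_{0\to a_1}(z)=1/\cosh(a_1w)$) yields
\[
\sech(a_2 w)=\sum_{k=0}^{\infty}\frac{\sinh(a_1 w)\,\sinh((a_2-a_1)w)^k}{\cosh(a_1 w)^{k+1}\,\sinh(a_2 w)^{k+1}},
\]
in which the geometric weights $p_k=\lambda(1-\lambda)^k$ with $\lambda=a_1/a_2$ appear as the evaluations of each summand at $w=0$: they are precisely the probabilities of exactly $k$ round trips before the final escape.

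Second, I would introduce the parameter $x$ by multiplying the identity by $e^{xw}\bigl(e^{2(a_2-2a_1)w}-1\bigr)$. The left-hand side then becomes $\sech(a_2 w)\bigl[e^{(x+2a_2-4a_1)w}-e^{xw}\bigr]$, and applying the umbral correspondence $\sech(aw)\,e^{cw}=e^{w(c+a(2\mathcal{E}+1))}$ identifies the $w^n/n!$ coefficient as $(2a_2)^n$ times the left-hand side of \eqref{eq:EnBnk}. On the right-hand side, each hyperbolic factor is converted umbrally using $\sinh(aw)/(aw)=e^{aw(2\mathcal{U}-1)}$, $aw/\sinh(aw)=e^{aw(2\mathcal{B}+1)}$, and $\sech(aw)=e^{aw(2\mathcal{E}+1)}$, so that the $k$-th summand becomes $p_k\,e^{wX_k}$ with $X_k$ an affine combination of independent uniform, Bernoulli, and Euler symbols carrying the coefficients $a_1$, $a_2-a_1$, $a_2$, and $a_1$ respectively.

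The main obstacle will be consolidating $X_k$ into an affine shift of a single higher-order Bernoulli symbol $\mathcal{B}^{(k+1)}$: the various symbols do not carry matching coefficients, so the cancellation rule \eqref{eq:cancel} cannot be invoked directly, and the Euler symbols generated by the $1/\cosh(a_1 w)^{k+1}$ factor have to be re-absorbed into the Bernoulli side via the doubling relation \eqref{eq:2BE}. Once this collapse is achieved, the $k$-th term contributes $(2a_1)^n B_n^{(k+1)}\bigl(\frac{x+a_2}{4a_1}+\frac{k}{2}\bigr)$ after rescaling. Finally, the extra prefactor $(n+1)(1-2a_1/a_2)$ on the right-hand side is accounted for by the auxiliary factor $e^{2(a_2-2a_1)w}-1=2(a_2-2a_1)w+O(w^3)$: the constant $2(a_2-2a_1)$ rescales to $(1-2\lambda)$, while the extra power of $w$ is absorbed through the derivative identity $\partial_t\bigl[t\,(t/(e^t-1))^{k+1}e^{yt}\bigr]=\sum_{n=0}^{\infty}(n+1)B_n^{(k+1)}(y)\,t^n/n!$, which contributes the $(n+1)$ weight.
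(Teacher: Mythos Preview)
Your overall strategy is the same one the paper uses: decompose $\phi_{0\to a_2}$ via the single loop \eqref{eq:OneLoop}, convert every hyperbolic factor to an exponential of an umbral symbol, and then insert an extra uniform symbol carrying the coefficient $2(a_2-2a_1)$ so that the right-hand side collapses, via $\mathcal{B}+\mathcal{U}=0$ and $\mathcal{B}+\mathcal{E}=2\mathcal{B}$, to a pure higher-order Bernoulli expression. The paper implements that last step by the substitution $x\mapsto x+2(a_2-2a_1)\mathcal{U}$ in the already-extracted coefficient identity; your multiplication by $e^{xw}\bigl(e^{2(a_2-2a_1)w}-1\bigr)$ is exactly that substitution composed with an extra factor $2(a_2-2a_1)w$, since $e^{2(a_2-2a_1)w}-1=2(a_2-2a_1)w\,e^{2(a_2-2a_1)w\mathcal{U}}$. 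So up to this point the two arguments are essentially identical.

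The gap is in your last paragraph, where you account for the factor $(n+1)$. The derivative identity you quote is correct but irrelevant: you never differentiate in $w$, you multiply by $w$. If $g(w)=\sum_m c_m\,w^m/m!$ then $w\,g(w)=\sum_m m\,c_{m-1}\,w^m/m!$, so the extra $w$ shifts the Bernoulli degree on the right down by one and produces a factor $n$, not a factor $(n+1)$ attached to $B_n^{(k+1)}$. Carried through, your computation yields $E_n$ on the left paired with $B_{n-1}^{(k+1)}$ on the right (equivalently, after relabeling, $E_{n+1}$ on the left with $B_n^{(k+1)}$), which is not what you claim. The cleaner bookkeeping is the paper's: drop the stray $w$ by multiplying instead by $\dfrac{e^{2(a_2-2a_1)w}-1}{2(a_2-2a_1)w}=e^{2(a_2-2a_1)w\mathcal{U}}$, so that on the right the new $\mathcal{U}$ cancels against a Bernoulli symbol with no residue, while on the left the $(n+1)$ appears naturally from integrating the uniform symbol, via $\int_0^1 E_n\bigl(c+\alpha t\bigr)\,dt=\dfrac{E_{n+1}(c+\alpha)-E_{n+1}(c)}{\alpha(n+1)}$.
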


\begin{proof}
From formulas above, we have
\[
\phi_{0\rightarrow a_{1}}\left(z\right)=\sech\left(a_{1}w\right),\ \phi_{0\rightarrow a_{2}}\left(z\right)=\sech\left(a_{2}w\right),\ \phi_{a_{1}\to a_{2}\vert\bcancel{0}}\left(z\right)=\frac{\sinh\left(a_{1}w\right)}{\sinh\left(a_{2}w\right)},
\]
and 
\[
\phi_{a_{1}\to0\vert\bcancel{a_{2}}}\left(z\right)=\frac{\sinh\left(\left(a_{2}-a_{1}\right)w\right)}{\sinh\left(a_{2}w\right)}.
\]
The process includes one possible loop between sites $0$ and $a_{1}$,
so that by \eqref{eq:OneLoop}, we have 
\begin{align}
\phi_{0\rightarrow a_{2}}\left(z\right) & =\phi_{0\rightarrow a_{1}}\left(z\right)\phi_{a_{1}\to a_{2}\vert\bcancel{0}}\left(z\right)\sum_{k=0}^{\infty}\left(\phi_{0\rightarrow a_{1}}\left(z\right)\phi_{a_{1}\to0\vert\bcancel{a_{2}}}\left(z\right)\right)^{k}\nonumber \\
 & =\sech\left(a_{1}w\right)\cdot\frac{\sinh\left(a_{1}w\right)}{\sinh\left(a_{2}w\right)}\sum_{k=0}^{\infty}\left(\sech\left(a_{1}w\right)\frac{\sinh\left(\left(a_{2}-a_{1}\right)w\right)}{\sinh\left(a_{2}w\right)}\right)^{k}\label{eq:1dim2N}\\
 & =\frac{\sinh\left(a_{1}w\right)}{\cosh\left(a_{1}w\right)\sinh\left(a_{2}w\right)}\cdot\frac{1}{1-\frac{\sinh\left(\left(a_{2}-a_{1}\right)w\right)}{\cosh\left(a_{1}w\right)\sinh\left(a_{2}w\right)}}\nonumber \\
 & =\frac{\sinh\left(a_{1}w\right)}{\cosh\left(a_{1}w\right)\sinh\left(a_{2}w\right)-\sinh\left(\left(a_{2}-a_{1}\right)w\right)},\nonumber 
\end{align}
which coincides with $\phi_{0\rightarrow a_{2}}\left(z\right)=\sech\left(a_{2}w\right)$,
since
\[
\sinh\left(\left(a_{2}-a_{1}\right)w\right)=\sinh\left(a_{2}w\right)\cosh\left(a_{1}w\right)-\cosh\left(a_{2}w\right)\sinh\left(a_{1}w\right).
\]
Meanwhile, we transform \eqref{eq:1dim2N} into a form involving Bernoulli,
uniform and Euler symbols as follows:
\begin{align*}
e^{a_{2}w\left(2\mathcal{E}+1\right)}= & \frac{a_{1}}{a_{2}}e^{a_{1}w\left(2\mathcal{U}-1\right)}e^{a_{2}w\left(2\mathcal{B}+1\right)}\\
 & \times\sum_{k=0}^{\infty}e^{a_{1}w\left(2\mathcal{E}^{\left(k+1\right)}+k+1\right)}\left(\frac{a_{2}-a_{1}}{a_{2}}\right)^{k}e^{\left(a_{2}-a_{1}\right)w\left(2\mathcal{U}^{\left(k\right)}-k\right)}e^{a_{2}w\left(2\mathcal{B}^{\left(k\right)}+k\right)}\\
= & e^{a_{1}w\left(2\mathcal{U}-1\right)+a_{2}w\left(2\mathcal{B}+1\right)}\\
 & \times\sum_{k=0}^{\infty}p_{k}e^{a_{1}w\left(2\mathcal{E}^{\left(k+1\right)}+k+1\right)+\left(a_{2}-a_{1}\right)w\left(2\mathcal{U}^{\left(k\right)}-k\right)+a_{2}w\left(2\mathcal{B}^{\left(k\right)}+k\right)}.
\end{align*}
Multiplying both sides by $e^{xw}$ and comparing coefficients of
$w^{n}$ yield
\begin{align*}
\left(x+2a_{2}\mathcal{E}+a_{2}\right)^{n}= & \sum_{k=0}^{\infty}p_{k}\bigg[x+a_{1}\left(2\mathcal{U}-1\right)+a_{2}\left(2\mathcal{B}+1\right)+a_{1}\left(2\mathcal{E}^{\left(k+1\right)}+k+1\right)\\
 & +\left(a_{2}-a_{1}\right)\left(2\mathcal{U}^{\left(k\right)}-k\right)+a_{2}\left(2\mathcal{B}^{\left(k\right)}+k\right)\bigg]^{n}\\
= & \sum_{k=0}^{\infty}p_{k}\left[x+2\left(a_{2}-a_{1}\right)\mathcal{B}+2a_{1}\mathcal{E}^{\left(k+1\right)}+2a_{1}\mathcal{B}^{\left(k\right)}+a_{2}+2a_{1}k\right]^{n},
\end{align*}
where, in the last step, we have used the fact that $\mathcal{B}$
and $\mathcal{U}$ cancel each other according to \eqref{eq:cancel}.
Now, the substitution $x\mapsto x+2\left(a_{2}-2a_{1}\right)\mathcal{U}$
yields 
\[
\left(x+2\left(a_{2}-2a_{1}\right)\mathcal{U}+2a_{2}\mathcal{E}+a_{2}\right)^{n}=\sum_{k=0}^{\infty}p_{k}\left(x+2a_{1}\mathcal{E}^{\left(k+1\right)}+2a_{1}\mathcal{B}^{\left(k+1\right)}+a_{2}+2a_{1}k\right)^{n}.
\]
Here, the left-hand side can be computed as
\begin{align*}
2^{n}a_{2}^{n}\left[\mathcal{E}+\frac{x}{2a_{2}}+\frac{1}{2}+\left(1-2\frac{a_{1}}{a_{2}}\right)\mathcal{U}\right]^{n} & =2^{n}a_{2}^{n}E_{n}\left(\frac{x}{2a_{2}}+\frac{1}{2}+\left(1-2\frac{a_{1}}{a_{2}}\right)\mathcal{U}\right)\\
 & =2^{n}a_{2}^{n}\int_{0}^{1}E_{n}\left(\frac{x}{2a_{2}}+\frac{1}{2}+\left(1-2\frac{a_{1}}{a_{2}}\right)t\right)\mathrm{d}t\\
 & =\frac{2^{n}a_{2}^{n}\left[E_{n}\left(\frac{x}{2a_{2}}+\frac{3}{2}-2\frac{a_{1}}{a_{2}}\right)-E_{n}\left(\frac{x}{2a_{2}}+\frac{1}{2}\right)\right]}{\left(1-2\frac{a_{1}}{a_{2}}\right)\left(n+1\right)};
\end{align*}
while the right-hand side is, by \eqref{eq:2BE}, 
\begin{align*}
 & \sum_{k=0}^{\infty}p_{k}\left(x+2a_{1}\mathcal{E}^{\left(k+1\right)}+2a_{1}\mathcal{B}^{\left(k+1\right)}+a_{2}+2a_{1}k\right)^{n}\\
= & \sum_{k=0}^{\infty}p_{k}\left(x+4a_{1}\mathcal{B}^{\left(k+1\right)}+a_{2}+2a_{1}k\right)^{n}\\
= & \sum_{k=0}^{\infty}p_{k}4^{n}a_{1}^{n}B_{n}^{\left(k+1\right)}\left(\frac{x}{4a_{1}}+\frac{a_{2}}{4a_{1}}+\frac{k}{2}\right).
\end{align*}
Further simplification completes the proof.
\end{proof}
\begin{rem}
For uniformly spaced levels $a_{1}=a_2/2=a/2$,
identity \eqref{eq:EnBnk} collapses to the trivial identity $0=0$. 
\end{rem}

\subsection{The case of $4$ sites}

We consider now the case of four sites $0=a_{0}<a_{1}<a_{2}<a_{3}$
with two possible loops, one between $a_{0}$ and $a_{1}$ and one
between $a_{1}$ and $a_{2}$. For the sake of simplicity, we consider
only the case where all sites uniformly distributed, with $a_{i}=i$
for $i=0,\ 1,\ 2,\ 3$, and obtain the following identity.
\begin{thm}
For any positive integer $n$, the Euler polynomial of degree $n$ can be expressed
as a linear combination of high-order Euler polynomials of the same degree as
\begin{equation}
E_{n}\left(x\right)=\sum_{k=0}^{\infty}\frac{3^{k-n}}{4^{k+1}}E_{n}^{\left(2k+3\right)}\left(3x+k\right).\label{eq:Thm3}
\end{equation}
\end{thm}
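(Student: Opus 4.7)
The plan mirrors the three-site proof of the previous theorem: I will combine the loop structure from Section~\ref{subsec:loops} with the hitting-time formulas \eqref{eq:1dim1}--\eqref{eq:1dim3} to produce a scalar identity in $\sech(w)$, then translate it into the umbral form to obtain \eqref{eq:Thm3}.

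Specializing the hitting formulas to $a_i=i$ (with $w=\sqrt{2z}$), each conditional neighbour-to-neighbour transition simplifies to $1/(2\cosh w)$, while $\phi_{0\to 1}=\sech(w)$. Thus $I_{0,1}=1/(2\cosh^2 w)$ and $I_{1,2}=1/(4\cosh^2 w)$, giving $I_{0,1}+I_{1,2}=\tfrac{3}{4}\sech^2(w)$. Applying the two-loop formula \eqref{eq:TwoLoops} to the loops at sites $1$ and $2$ (the boundary case $b=c=1$) and multiplying by the three forward transitions $\phi_{0\to 1}\phi_{1\to 2\vert\bcancel{0}}\phi_{2\to 3\vert\bcancel{1}}=\sech^3(w)/4$ then gives
\[
\phi_{0\to 3}(z)=\frac{\sech^3(w)}{4}\sum_{k=0}^{\infty}\Big(\tfrac{3}{4}\sech^2(w)\Big)^k=\sum_{k=0}^{\infty}\frac{3^k}{4^{k+1}}\sech^{2k+3}(w).
\]
Since $\phi_{0\to 3}(z)=\sech(3w)$ directly, this produces the scalar identity $\sech(3w)=\sum_{k\ge 0}(3^k/4^{k+1})\sech^{2k+3}(w)$, which one may also verify from the triple-angle formula $\cosh(3w)=\cosh(w)(4\cosh^2 w-3)$.

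To reach \eqref{eq:Thm3} I will use $\sech(t)=e^{t(2\mathcal{E}+1)}$ and $\sech^p(t)=e^{t(2\mathcal{E}^{(p)}+p)}$, multiply both sides by $e^{yw}$, and compare coefficients of $w^n/n!$. The left side yields $(y+6\mathcal{E}+3)^n=6^n E_n((y+3)/6)$ and the right side yields $2^n\sum_k(3^k/4^{k+1})E_n^{(2k+3)}((y+2k+3)/2)$; the substitution $y=6x-3$ converts the arguments to $x$ and $3x+k$ respectively, and dividing by $6^n$ gives exactly \eqref{eq:Thm3}. The only slightly subtle point is the application of \eqref{eq:TwoLoops}: the two loops share the common level $a_1=1$ (boundary case $b=c$), but since the $(0,1)$-loops occur while the walker is at site $1$ and the $(1,2)$-loops while it is at site $2$, the two families remain disjoint in position and the enumeration of Section~\ref{subsec:loops} applies without change. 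Everything else is routine algebra.
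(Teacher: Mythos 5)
Your proposal is correct and follows essentially the same route as the paper: the same four-site loop decomposition with $a_i=i$ giving $\phi_{0\to3}(z)=\sum_{k\ge0}\frac{3^{k}}{4^{k+1}}\sech^{2k+3}(w)=\sech(3w)$, followed by the same umbral translation via $\sech^{p}(t)=e^{t(2\mathcal{E}^{(p)}+p)}$ and comparison of coefficients (your bookkeeping of the constant $3$ via $y=6x-3$ versus the paper's cancellation of $e^{3w}$ is immaterial). Your independent check of the scalar identity through the triple-angle formula $\cosh(3w)=\cosh(w)\left(4\cosh^{2}w-3\right)$ is a nice addition but not a different method.
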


\begin{proof}
By \eqref{eq:TwoLoops} and the basic identity $\sinh\left(2w\right)=2\sinh\left(w\right)\cosh\left(w\right)$,
we have
\begin{align*}
\phi_{0\rightarrow3}\left(z\right)= & \phi_{0\rightarrow1}\left(z\right)\phi_{1\to2\vert\bcancel{0}}\left(z\right)\phi_{2\to3\vert\bcancel{1}}\left(z\right)\cdot\\
 & \times\sum_{k=0}^{\infty}\left(\phi_{0\to1}\left(z\right)\phi_{1\to0\vert\bcancel{2}}\left(z\right)+\phi_{1\to2\vert\bcancel{0}}\left(z\right)\phi_{2\to1\vert\bcancel{3}}\left(z\right)\right)^{k}\\
= & \sech\left(w\right)\left(\frac{\sinh\left(w\right)}{\sinh\left(2w\right)}\right)^{2}\sum_{k=0}^{\infty}\left(\sech\left(w\right)\frac{\sinh\left(w\right)}{\sinh\left(2w\right)}+\frac{\sinh\left(w\right)}{\sinh\left(2w\right)}\cdot\frac{\sinh\left(w\right)}{\sinh\left(2w\right)}\right)^{k}\\
= & \sum_{k=0}^{\infty}\frac{1}{4}\left(\frac{3}{4}\right)^{k}\sech^{2k+3}\left(w\right),
\end{align*}
which is equal to $\sech\left(3w\right)=\phi_{0\rightarrow3}\left(z\right)$.
Therefore, we deduce 
\[
e^{3w\left(2\mathcal{E}+1\right)}=\sum_{k=0}^{\infty}\frac{3^{k}}{4^{k+1}}e^{w\left(2\mathcal{E}^{\left(2k+3\right)}+2k+3\right)},
\]
namely, 
\[
e^{6w\mathcal{E}}=\sum_{k=0}^{\infty}\frac{3^{k}}{4^{k+1}}e^{2w\mathcal{E}^{\left(2k+3\right)}+2wk}.
\]
Multiplying both sides by $e^{xw}$ and comparing coefficients of
$w^{n}$, we obtain 
\[
\left(x+6\mathcal{E}\right)^{n}=\sum_{k=0}^{\infty}\frac{3^{k}}{4^{k+1}}\left(x+2k+2\mathcal{E}^{\left(2k+3\right)}\right)^{n},
\]
where the left-hand side is 
\[
\left(x+6\mathcal{E}\right)^{n}=6^{n}\left(\mathcal{E}+\frac{x}{6}\right)^{n}=6^{n}E_{n}\left(\frac{x}{6}\right);
\]
and the right-hand side is
\begin{align*}
\sum_{k=0}^{\infty}\frac{3^{k}}{4^{k+1}}\left(x+2k+2\mathcal{E}^{\left(2k+3\right)}\right)^{n} & =\sum_{k=0}^{\infty}\frac{3^{k}}{4^{k+1}}2^{n}\left(\frac{x}{2}+k+\mathcal{E}^{\left(2k+3\right)}\right)^{n}\\
 & =\sum_{k=0}^{\infty}\frac{3^{k}}{4^{k+1}}2^{n}E_{n}^{\left(2k+3\right)}\left(\frac{x}{2}+k\right).
\end{align*}
Simplification completes the proof. 
\end{proof}
In the general case of 4 sites that are not uniformly spaced, we obtain
a much more complicated result as follows:
\begin{thm}
For any $0<a_{1}<a_{2}<a_{3}$ and arbitrary positive integer $n$,  we have

\begin{align*}
E_{n}\left(\frac{x}{2a_{3}}+\frac{1}{2}\right)= & \sum_{k=0}^{\infty}\sum_{l=0}^{k}q_{k,l}\left(\frac{a_{1}}{2a_{3}}\right)^{n}E_{n}^{\left(l\right)}\left(\frac{x}{a_{1}}+2\frac{a_{2}-a_{1}}{a_{1}}\mathcal{B}+2\frac{a_{3}-a_{2}}{a_{1}}\mathcal{B}'+2\frac{a_{2}-a_{1}}{a_{1}}\mathcal{U}^{\left(l\right)}\right.\\
 & \left.+2\mathcal{U}'^{\left(k-l\right)}+2\frac{a_{2}-a_{1}}{a_{1}}\mathcal{B}'^{\left(k-l\right)}+\frac{r_{k,l}}{a_{1}}\right),
\end{align*}
with the coefficients
\[
q_{k,l}=\binom{k}{l}\frac{\left(a_{2}-a_{1}\right)^{l+1}a_{1}^{k-l+1}\left(a_{3}-a_{2}\right)^{k-l}}{a_{2}^{k+1}\left(a_{3}-a_{1}\right)^{k-l+1}},
\]
and 
\[
r_{k,l}=a_{3}+\left(2k-2l\right)a_{2}+\left(3l-k+1\right)a_{1}.
\]
\end{thm}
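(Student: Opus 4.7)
The plan is to mirror the strategies of the 3-site and uniform 4-site theorems. First, I would apply the two-loops formula \eqref{eq:TwoLoops} to $\phi_{0\to a_3}(z)=\sech(a_3 w)$ with the two basic loops being $(0,a_1)$ and $(a_1,a_2)$. That is, writing
$$\sech(a_3 w)\;=\;P\sum_{k=0}^{\infty}\sum_{l=0}^{k}\binom{k}{l}\,I_{0,a_1}^{l}\,I_{a_1,a_2}^{k-l},$$
where $P = \phi_{0\to a_1}\phi_{a_1\to a_2|\bcancel{0}}\phi_{a_2\to a_3|\bcancel{a_1}}$, $I_{0,a_1}=\phi_{0\to a_1}\phi_{a_1\to 0|\bcancel{a_2}}$, and $I_{a_1,a_2}=\phi_{a_1\to a_2|\bcancel{0}}\phi_{a_2\to a_1|\bcancel{a_3}}$, and substituting the explicit hyperbolic expressions from \eqref{eq:1dim1}--\eqref{eq:1dim3}.

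Next, I would translate each $\sech$, $\sinh$, and $1/\sinh$ factor into its umbral exponential form via the dictionary $\sech^p t = e^{t(2\mathcal{E}^{(p)}+p)}$, $\sinh^p t = t^p e^{t(2\mathcal{U}^{(p)}-p)}$, and $\sinh^{-p}t = t^{-p} e^{t(2\mathcal{B}^{(p)}+p)}$, introducing a fresh independent copy of each umbra for each factor. The scalar $t^p$ pieces collect into exactly the prefactor $q_{k,l}$, while the exponentials multiply into one exponential $e^{w\,\mathrm{Exp}_{k,l}}$ whose argument is a linear combination of many independent Bernoulli, uniform, and Euler symbols together with a scalar constant depending polynomially on $k,l,a_1,a_2,a_3$. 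Multiplying by $e^{xw}$ and extracting the coefficient of $w^n/n!$ gives an umbral identity $(x + a_3 + 2a_3\mathcal{E})^n = \sum_{k,l}q_{k,l}(x + \mathrm{Exp}_{k,l})^n$; the left-hand side equals $(2a_3)^n E_n(x/(2a_3)+1/2)$.

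Finally, I would apply the cancellation rule \eqref{eq:cancel} in its weighted form $c(\mathcal{U}+\mathcal{B}) \equiv 0$ (and its higher-order analogue $c(\mathcal{U}^{(p)}+\mathcal{B}^{(p)})\equiv 0$) to eliminate the excess umbrae in $\mathrm{Exp}_{k,l}$, pairing each $\mathcal{U}$-type symbol with its $\mathcal{B}$-type partner of matching order and absorbing the smaller of the two coefficients. The intent is to leave exactly the combination $2(a_2-a_1)\mathcal{B} + 2(a_3-a_2)\mathcal{B}' + 2(a_2-a_1)\mathcal{U}^{(l)} + 2a_1\mathcal{U}'^{(k-l)} + 2(a_2-a_1)\mathcal{B}'^{(k-l)}$ together with the order-$l$ Euler $a_1\mathcal{E}^{(l)}$ (coming from $\sech^l(a_1 w)$ in $I_{0,a_1}^l$, after the order-$1$ Euler $\mathcal{E}$ from $P$ is disposed of via $2\mathcal{B}=\mathcal{B}+\mathcal{E}$), plus the collected scalar $r_{k,l}$. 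Dividing the resulting identity by $(2a_3)^n$ and factoring $a_1$ from the umbral argument produces the $(a_1/(2a_3))^n$ prefactor and recovers the theorem's form. The main obstacle will be the delicate bookkeeping of these cancellations: one must pair off the extra umbrae correctly, use $2\mathcal{B}^{(p)} = \mathcal{B}^{(p)} + \mathcal{E}^{(p)}$ to shuffle Euler contributions between orders, and verify that the accumulated constant shifts coming from the $\pm p$ parts of each exponential reassemble into the stated $r_{k,l} = a_3 + 2(k-l)a_2 + (3l-k+1)a_1$.
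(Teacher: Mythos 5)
Your first three steps coincide with the paper's own argument: the paper applies the two-loop formula \eqref{eq:TwoLoops} to $\phi_{0\rightarrow a_{3}}$ with loops $(0,a_{1})$ and $(a_{1},a_{2})$, expands the $k$-th power binomially, inserts the hyperbolic expressions \eqref{eq:1dim1}--\eqref{eq:1dim3}, and then converts the product of $\sech$, $\sinh$ and $1/\sinh$ factors into umbral exponentials, multiplies by $e^{xw}$ and compares coefficients of $w^{n}$; the scalar prefactors do assemble into $q_{k,l}$, and the left-hand side is $(2a_{3})^{n}E_{n}\bigl(\frac{x}{2a_{3}}+\frac{1}{2}\bigr)$ exactly as you say. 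The paper stops there and simply asserts the final display; it gives no derivation of the ``reduced'' form you are aiming for.

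The genuine gap is your last step. The cancellation rule \eqref{eq:cancel} has no weighted version: $c_{1}\mathcal{U}+c_{2}\mathcal{B}$ corresponds to $\frac{e^{c_{1}w}-1}{c_{1}w}\cdot\frac{c_{2}w}{e^{c_{2}w}-1}$, which is $1$ only when $c_{1}=c_{2}$, so you cannot ``absorb the smaller of the two coefficients.'' In the present configuration no pairing is available at all: the direct umbral translation of the $(k,l)$ term is
\begin{align*}
&x+2a_{1}\mathcal{E}^{(l+1)}+2(a_{2}-a_{1})\mathcal{U}^{(l+1)}+2a_{1}\mathcal{U}'^{(k-l+1)}+2(a_{3}-a_{2})\mathcal{U}''^{(k-l)}\\
&\quad+2a_{2}\mathcal{B}^{(k+1)}+2(a_{3}-a_{1})\mathcal{B}'^{(k-l+1)}+a_{3}+(2k-2l)a_{2}+(4l-2k)a_{1},
\end{align*}
and the uniform coefficients $2(a_{2}-a_{1})$, $2a_{1}$, $2(a_{3}-a_{2})$ never match the Bernoulli coefficients $2a_{2}$, $2(a_{3}-a_{1})$; likewise \eqref{eq:2BE} cannot dispose of $2a_{1}\mathcal{E}^{(l+1)}$ because no Bernoulli symbol with coefficient $2a_{1}$ is present to trade against. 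So the sequence of cancellations you plan cannot be executed, and your proof should terminate at the raw identity above (which does check at degree one), not at the printed form with $a_{1}\mathcal{E}^{(l)}$, $2(a_{3}-a_{2})\mathcal{B}'$ and $r_{k,l}=a_{3}+(2k-2l)a_{2}+(3l-k+1)a_{1}$. Indeed, a degree-one evaluation of that printed form (the weights $q_{k,l}$ are positive and sum to $1$) leaves strictly positive extra terms, so the displayed statement itself appears to need correction; treating it as the target of forced cancellations would only reproduce the discrepancy rather than prove the theorem.
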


\begin{proof}
Apply \eqref{eq:TwoLoops} to obtain 
\begin{align*}
\phi_{0\rightarrow a_{3}}\left(z\right)= & \phi_{0\rightarrow a_{1}}\left(z\right)\phi_{a_{1}\to a_{2}\vert\bcancel{0}}\left(z\right)\phi_{a_{2}\to a_{3}\vert\bcancel{a_{1}}}\left(z\right)\\
 & \times\sum_{k=0}^{\infty}\left(\phi_{0\to a_{1}}\left(z\right)\phi_{a_{1}\to0\vert\bcancel{a_{2}}}\left(z\right)+\phi_{a_{1}\to a_{2}\vert\bcancel{0}}\left(z\right)\phi_{a_{2}\to a_{1}\vert\bcancel{a_{3}}}\left(z\right)\right)^{k}\\
= & \sech\left(a_{1}w\right)\cdot\frac{\sinh\left(a_{1}w\right)}{\sinh\left(a_{2}w\right)}\cdot\frac{\sinh\left(\left(a_{2}-a_{1}\right)w\right)}{\sinh\left(\left(a_{3}-a_{1}\right)w\right)}\\
 & \times\sum_{k=0}^{\infty}\left(\sech\left(a_{1}w\right)\cdot\frac{\sinh\left(\left(a_{2}-a_{1}\right)w\right)}{\sinh\left(a_{2}w\right)}+\frac{\sinh\left(a_{1}w\right)}{\sinh\left(a_{2}w\right)}\cdot\frac{\sinh\left(\left(a_{3}-a_{2}\right)w\right)}{\sinh\left(\left(a_{3}-a_{1}\right)w\right)}\right)^{k}\\
= & \sum_{k=0}^{\infty}\frac{1}{\sinh^{k+1}\left(a_{2}w\right)}\sum_{l=0}^{k}\binom{k}{l}\sech^{l+1}\left(a_{1}w\right)\sinh^{l+1}\left(\left(a_{2}-a_{1}\right)w\right)\\
 & \times\frac{\sinh^{k-l+1}\left(a_{1}w\right)\sinh^{k-l}\left(\left(a_{3}-a_{2}\right)w\right)}{\sinh^{k-l+1}\left(\left(a_{3}-a_{1}\right)w\right)}
\end{align*}
Now, we multiply both sides by $e^{xw}$ and compare coefficients
of $w^{n}$, with the notation

\[
q_{k,l}=\binom{k}{l}\frac{\left(a_{2}-a_{1}\right)^{l+1}a_{1}^{k-l+1}\left(a_{3}-a_{2}\right)^{k-l}}{a_{2}^{k+1}\left(a_{3}-a_{1}\right)^{k-l+1}}
\]
and 
\[
r_{k,l}=a_{3}+\left(2k-2l\right)a_{2}+\left(3l-k+1\right)a_{1},
\]
to obtain 
\begin{align*}
\left(x+2a_{3}\mathcal{E}+a_{3}\right)^{n}= & \sum_{k=0}^{\infty}\sum_{l=0}^{k}q_{k,l}\bigg[x+2\left(a_{2}-a_{1}\right)\mathcal{B}+2\left(a_{3}-a_{2}\right)\mathcal{B}'+a_{1}\mathcal{E}^{\left(l\right)}+2\left(a_{2}-a_{1}\right)\mathcal{U}^{\left(l\right)}\\
 & +2a_{1}\mathcal{U}'^{\left(k-l\right)}+2\left(a_{2}-a_{1}\right)\mathcal{B}'^{\left(k-l\right)}+r_{k,l}\bigg]^{n}.
\end{align*}
\end{proof}

\section{\label{sec:ThreeDim}Bessel Process in $\mathbb{R}^{3}$}

\subsection{Introduction}

We consider now a Bessel process $R_{t}^{\left(3\right)}$ in $\mathbb{R}^{3}$.
Using similar notations $\phi_{a\to b}\left(z\right)$ and $\phi_{a\to b\vert\bcancel{c}}\left(z\right)$
as in the previous section and considering sites $a<b<c$ that are
now concentric spheres of radii $a$, $b$, and $c$, we will need
the following formulas from \cite[pp.~463--464]{handbook}:

\begin{align}
\phi_{a\rightarrow b}\left(z\right) & =\frac{b\sinh\left(aw\right)}{a\sinh\left(bw\right)},\label{eq:3dim1}\\
\phi_{b\to c\vert\bcancel{a}}\left(z\right) & =\frac{c\sinh\left(\left(b-a\right)w\right)}{b\sinh\left(\left(c-a\right)w\right)},\label{eq:3dim2}\\
\phi_{b\to a\vert\bcancel{c}}\left(z\right) & =\frac{a\sinh\left(\left(c-b\right)w\right)}{c\sinh\left(\left(c-a\right)w\right)}.\label{eq:3dim3}
\end{align}
\begin{rem*}
One can easily check that 
\[
\phi_{a_{1}\to0\vert\bcancel{a_{2}}}\left(z\right)=\frac{0\cdot\sinh\left(\left(a_{2}-a_{1}\right)w\right)}{a_{2}\sinh\left(\left(a_{2}-0\right)w\right)}=0,
\]
so that the first possible loop is now between $a_{1}$ and $a_{2}$,
different from the $1$-dimensional case, where the first possible loop
was between the origin $0$ and the first site $a_{1}$.
\end{rem*}

\subsection{The case  $N=3$}
\begin{thm}
\label{thm:N3}In the case of three concentric spheres of arbitrary
radii $0<a_{1}<a_{2}<a_{3}$, we deduce the following expression of a Bernoulli polynomial of degree $n$ in terms of higher-order Bernoulli polynomials
\begin{eqnarray*}
B_{n}\left(\frac{x+a_{3}}{2a_{3}}\right) & = & \left(\frac{a_{2}}{a_{3}}\right)^{n}\sum_{k=0}^{\infty}p_{k}B_{n}^{\left(k+1\right)}\bigg[\frac{x}{2a_{2}}+\beta_{k}+\frac{a_{2}-a_{1}}{a_{2}}\mathcal{U}+\frac{a_{3}-a_{2}}{a_{2}}\mathcal{U}^{\left(k\right)}+\frac{a_{1}}{a_{2}}\mathcal{U}^{'\left(k\right)}\\
 &  & +\frac{a_{3}-a_{1}}{a_{2}}\mathcal{B}^{\left(k+1\right)}\bigg]^{n}
\end{eqnarray*}
with
\[
p_{k}=\left(1-\alpha\right)\alpha^{k},\thinspace\thinspace\beta_{k}=\frac{a_{3}+2ka_{2}-2ka_{1}}{2a_{3}}
\]
and 
\[
\alpha=\frac{\left(a_{3}-a_{2}\right)a_{1}}{\left(a_{3}-a_{1}\right)a_{2}}.
\]
\end{thm}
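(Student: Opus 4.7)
The plan is to mirror the strategy of the proof above for the $3$-sites $1$-dimensional case: apply the single-loop formula \eqref{eq:OneLoop} to $\phi_{0\to a_{3}}(z)$, transcribe each hyperbolic factor into its umbral exponential form via $\sinh(tw)/(tw)=e^{tw(2\mathcal{U}-1)}$ and $(tw)/\sinh(tw)=e^{tw(2\mathcal{B}+1)}$, and extract the coefficient of $w^{n}$ after multiplying both sides by $e^{xw}$.

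The key structural difference from the $1$-dimensional setting is that, by the remark following \eqref{eq:3dim3}, $\phi_{a_{1}\to 0\vert\bcancel{a_{2}}}(z)=0$, so no loop between $0$ and $a_{1}$ is possible and the first admissible loop is between $a_{1}$ and $a_{2}$. The process therefore travels $0\to a_{1}\to a_{2}$, performs $k\ge 0$ round trips $a_{2}\to a_{1}\to a_{2}$, and finally reaches $a_{3}$, so that \eqref{eq:OneLoop} gives
\begin{equation*}
\phi_{0\to a_{3}}(z)=\phi_{0\to a_{1}}\,\phi_{a_{1}\to a_{2}\vert\bcancel{0}}\,\phi_{a_{2}\to a_{3}\vert\bcancel{a_{1}}}\sum_{k\ge0}\bigl(\phi_{a_{2}\to a_{1}\vert\bcancel{a_{3}}}\,\phi_{a_{1}\to a_{2}\vert\bcancel{0}}\bigr)^{k}.
\end{equation*}
Inserting \eqref{eq:3dim1}--\eqref{eq:3dim3} and converting to umbral exponentials, a direct computation identifies the numerical coefficient of $\phi_{a_{2}\to a_{3}\vert\bcancel{a_{1}}}$ as $1-\alpha$ and that of the loop factor as $\alpha$ with the value stated in the theorem, so the geometric series directly produces the probability weights $p_{k}=(1-\alpha)\alpha^{k}$.

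On the exponential side, the cancellation identity $\mathcal{B}+\mathcal{U}=0$ from \eqref{eq:cancel} eliminates the paired factor $e^{a_{1}w(2\mathcal{B}+1)}e^{a_{1}w(2\mathcal{U}-1)}$ produced by $\phi_{0\to a_{1}}\,\phi_{a_{1}\to a_{2}\vert\bcancel{0}}$. The $k$-th power of the loop contributes $k$ independent copies of each remaining $\mathcal{B}$ and $\mathcal{U}$, which I would bundle into the higher-order symbols $\mathcal{B}^{(k)}$, $\mathcal{U}^{(k)}$, and $\mathcal{U}'^{(k)}$; combining an additional Bernoulli copy carried by the remaining initial factors then promotes the order to $k+1$, producing the $\mathcal{B}^{(k+1)}$ that appears in the statement.

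Multiplying by $e^{xw}$ and extracting the $w^{n}$ coefficient, the left side $\phi_{0\to a_{3}}=e^{a_{3}w(2\mathcal{B}+1)}$ reassembles as $(2a_{3})^{n}B_{n}\bigl((x+a_{3})/(2a_{3})\bigr)$, while the right side factors as $(2a_{2})^{n}\sum_{k}p_{k}B_{n}^{(k+1)}[\cdots]$, yielding the overall prefactor $(a_{2}/a_{3})^{n}$; the constant $\beta_{k}$ and the various $\mathcal{U}$-coefficients inside the argument of $B_{n}^{(k+1)}$ are then read off from the sum of numerical constants and symbol weights accumulated in the exponents. The main obstacle will be the careful bookkeeping of the several independent copies of $\mathcal{B}$ and $\mathcal{U}$ that arise from the prefactor and from the loop, and arranging the cancellations and bundlings so that the final sum displays precisely the combination of symbols listed in the theorem with the prescribed coefficients.
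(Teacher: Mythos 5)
Your proposal follows essentially the same route as the paper's proof: the same single-loop decomposition of $\phi_{0\to a_{3}}$ (with the first loop between $a_{1}$ and $a_{2}$ since $\phi_{a_{1}\to 0\vert\bcancel{a_{2}}}=0$), the same umbral transcription of the hyperbolic factors, and the same extraction of the coefficient of $w^{n}$ after multiplying by $e^{xw}$, with your weights $p_{k}=(1-\alpha)\alpha^{k}$ matching the paper's $r_{k}$. The only bookkeeping point to keep straight is that each loop actually carries \emph{two} Bernoulli families (with weights $a_{3}-a_{1}$ and $a_{2}$), and it is the $a_{2}$-weighted copy, promoted to order $k+1$ by the initial factor $\phi_{0\to a_{1}}\phi_{a_{1}\to a_{2}\vert\bcancel{0}}$, that is absorbed as the $B_{n}^{(k+1)}$ evaluation after factoring out $(2a_{2})^{n}$, while the $(a_{3}-a_{1})$-weighted copy stays inside its argument as $\frac{a_{3}-a_{1}}{a_{2}}\mathcal{B}^{(k+1)}$.
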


The special case of uniformly spaced radii is as follows.
\begin{cor}
\label{cor:2}In the case of three spheres of radii $a_{1}=1,a_{2}=2$, 
and $a_{3}=3$, we obtain the identity
\begin{equation}
\frac{3^{n+1}}{n+1}\left[B_{n+1}\left(\frac{x}{6}+\frac{5}{6}\right)-B_{n+1}\left(\frac{x}{6}+\frac{1}{2}\right)\right]=\frac{3}{4}\sum_{k=0}^{\infty}\left(\frac{1}{4}\right)^{k}E_{n}^{\left(2k+2\right)}\left(\frac{x+3+2k}{2}\right).\label{eq:123}
\end{equation}
\end{cor}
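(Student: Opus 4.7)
The plan is to specialize the loop-decomposition underlying Theorem~\ref{thm:N3} to the uniformly spaced radii $a_{1}=1$, $a_{2}=2$, $a_{3}=3$, where the identity $\sinh(2w)=2\sinh(w)\cosh(w)$ collapses each bridge generating function to a constant multiple of $\sech(w)$. Specifically, \eqref{eq:3dim1}--\eqref{eq:3dim3} give $\phi_{0\to 1}=w/\sinh(w)$, $\phi_{1\to 2\vert\bcancel{0}}=\sech(w)$, $\phi_{2\to 3\vert\bcancel{1}}=(3/4)\sech(w)$, and $\phi_{2\to 1\vert\bcancel{3}}=(1/4)\sech(w)$, whence the single-loop contribution is $I_{1,2}=\sech^{2}(w)/4$. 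Inserting these into \eqref{eq:OneLoop} and translating via $w/\sinh(w)=e^{w(2\mathcal{B}+1)}$ and $\sech(w)=e^{w(2\mathcal{E}+1)}$ yields
\[
\phi_{0\to 3}(z)=\sum_{k=0}^{\infty}\frac{3}{4^{k+1}}\,e^{w(2\mathcal{B}+2\mathcal{E}^{(2k+2)}+2k+3)}.
\]
Since the same MGF equals $3w/\sinh(3w)=e^{w(6\mathcal{B}+3)}$, multiplying by $e^{xw}$ and comparing coefficients of $w^{n}$ produces the intermediate umbral identity
\[
(x+6\mathcal{B}+3)^{n}=\sum_{k=0}^{\infty}\frac{3}{4^{k+1}}\,(x+2\mathcal{B}+2\mathcal{E}^{(2k+2)}+2k+3)^{n},
\]
where the $\mathcal{B}$ symbols on the two sides are understood as independent copies.

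To pass to \eqref{eq:123}, I substitute $x\mapsto x+2\mathcal{U}$ with a fresh uniform symbol $\mathcal{U}$. On the right-hand side, since $\mathcal{U}$ is independent of the Bernoulli $\mathcal{B}$ appearing there, the scaled cancellation $e^{2w(\mathcal{B}+\mathcal{U})}=1$ coming from \eqref{eq:cancel} eliminates $2\mathcal{B}+2\mathcal{U}$, and each summand reduces to $(x+2\mathcal{E}^{(2k+2)}+2k+3)^{n}=2^{n}E_{n}^{(2k+2)}((x+2k+3)/2)$. On the left-hand side, the single scaled Bernoulli $6\mathcal{B}$ cannot be broken into independent parts to absorb the injected $\mathcal{U}$, so one obtains $(x+6\mathcal{B}+2\mathcal{U}+3)^{n}=6^{n}B_{n}((x+3)/6+\mathcal{U}/3)$.

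Finally, averaging over $\mathcal{U}$ converts the Bernoulli polynomial into a difference: $B_{n}(y+\mathcal{U}/3)=\int_{0}^{1}B_{n}(y+t/3)\,\mathrm{d}t=\frac{3}{n+1}[B_{n+1}(y+1/3)-B_{n+1}(y)]$ with $y=(x+3)/6$. The factor of $2^{n}$ cancels between the two sides, and the remaining constants combine as $6^{n}\cdot 3/(2^{n}(n+1))=3^{n+1}/(n+1)$, producing exactly \eqref{eq:123}. The main obstacle is the asymmetric role of the $x\mapsto x+2\mathcal{U}$ substitution: it activates \eqref{eq:cancel} on the right because the Bernoulli symbol there appears with coefficient~$2$ matching the injected uniform, while on the left $6\mathcal{B}$ is a single symbol scaled by~$6$ (not a sum of six independent copies), so $\mathcal{U}$ cannot cancel there and instead produces the integral giving the $B_{n+1}$ difference.
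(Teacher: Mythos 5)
Your proposal is correct and follows essentially the same route as the paper: the paper reaches the same intermediate umbral identity $3^{n}B_{n}\left(\frac{x}{6}+\frac{1}{2}\right)=\sum_{k\ge0}\frac{3}{4^{k+1}}E_{n}^{\left(2k+2\right)}\left(\frac{x+3+2k}{2}+\mathcal{B}\right)$ (there obtained by specializing the general-radii computation of Theorem \ref{thm:N3} and then using $2\mathcal{B}=\mathcal{B}+\mathcal{E}$ and the $\mathcal{U}$--$\mathcal{B}$ cancellation, whereas you specialize $a_{i}=i$ at the outset so the bridge functions collapse directly to powers of $\sech w$), and the concluding step, substituting $x\mapsto x+2\mathcal{U}$ to cancel the residual $2\mathcal{B}$ on the right while averaging the left into the $B_{n+1}$ difference, is exactly the paper's final move.
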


The specialization to the case $x=0$ and $n$ odd of 
this identity is
as follows.
\begin{cor}
The even Bernoulli number $B_{2m}$ can be expressed as a convex combination
of high-order Euler polynomials as follows
\[
B_{2m}=\frac{m}{\left(1-2^{1-2m}\right)\left(3^{2m}-1\right)}\sum_{k=0}^{\infty}\left(\frac{1}{4}\right)^{k}E_{2m-1}^{\left(2k+2\right)}\left(k+\frac{3}{2}\right).
\]
\end{cor}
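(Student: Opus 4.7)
The plan is to specialize the identity \eqref{eq:123} to the case $x=0$ and $n=2m-1$, and then evaluate the resulting left-hand side as an explicit scalar multiple of the Bernoulli number $B_{2m}$ using classical identities for $B_n$ at rational arguments. Setting $x=0$ collapses the argument of $E_{2m-1}^{(2k+2)}$ on the right of \eqref{eq:123} to exactly $k+3/2$, which matches the sum appearing in the claim. The left-hand side becomes
\[
\frac{3^{2m}}{2m}\Bigl[B_{2m}\bigl(\tfrac{5}{6}\bigr)-B_{2m}\bigl(\tfrac{1}{2}\bigr)\Bigr],
\]
so the entire problem reduces to writing this difference as a rational multiple of $B_{2m}$.

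To do this I would invoke three classical tools: the reflection $B_{2m}(1-y)=B_{2m}(y)$, the half-argument formula $B_{2m}(1/2)=(2^{1-2m}-1)B_{2m}$, and Raabe's multiplication theorem $B_n(Nx)=N^{n-1}\sum_{j=0}^{N-1}B_n(x+j/N)$. Reflection gives $B_{2m}(5/6)=B_{2m}(1/6)$. Applying Raabe with $N=3$ and $x=0$ yields $B_{2m}(1/3)$ in closed form; then applying Raabe with $N=6$ and $x=0$, pairing $B_{2m}(j/6)$ with $B_{2m}(1-j/6)$ by reflection, and substituting the known values for $B_{2m}(1/2)$ and $B_{2m}(1/3)$, one reaches the neat factorization
\[
B_{2m}\bigl(\tfrac{1}{6}\bigr)=\tfrac{1}{2}\bigl(1-2^{1-2m}\bigr)\bigl(1-3^{1-2m}\bigr)B_{2m}.
\]

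Subtracting $B_{2m}(1/2)$ from this and collecting the common factor $(1-2^{1-2m})$ produces the bracket $\tfrac{1}{2}(1-3^{1-2m})+1=\tfrac{3}{2}(1-3^{-2m})$, so that
\[
B_{2m}\bigl(\tfrac{5}{6}\bigr)-B_{2m}\bigl(\tfrac{1}{2}\bigr)=\frac{(3^{2m}-1)\bigl(1-2^{1-2m}\bigr)}{2\cdot 3^{2m-1}}\,B_{2m}.
\]
Multiplying by $3^{2m}/(2m)$ and equating to the right-hand side of \eqref{eq:123} isolates $B_{2m}$ and produces the stated formula. The only non-routine step is the closed-form evaluation of $B_{2m}(1/6)$; once that factorization is in hand, the remaining manipulations are direct bookkeeping, with the pleasant cancellation between the denominator $3^{2m-1}$ and the prefactor $3^{2m}/(2m)$ generating the clean constant $3/(4m)$ that ultimately yields the factor $m$ in the numerator of the claim.
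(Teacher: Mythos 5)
Your proposal is correct and follows essentially the same route as the paper: specialize \eqref{eq:123} at $x=0$, $n=2m-1$, and reduce $B_{2m}\left(\frac{5}{6}\right)-B_{2m}\left(\frac{1}{2}\right)$ to the factor $\frac{1}{2}\left(1-2^{1-2m}\right)\left(3^{2m}-1\right)3^{1-2m}B_{2m}$, which after multiplying by $\frac{3^{2m}}{2m}$ and equating with the right-hand side gives the claim. The only difference is cosmetic: you rederive the special values $B_{2m}\left(\frac{5}{6}\right)$ and $B_{2m}\left(\frac{1}{2}\right)$ from Raabe's multiplication theorem and the reflection formula, whereas the paper simply cites the corresponding NIST entries (24.4.27 and 24.4.32).
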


\begin{proof}
Take $x=0$ and $n=2m-1$ in \eqref{eq:123}. By Entries 24.4.27 and
24.4.32 of \cite{NIST}, the left-hand side becomes
\begin{align*}
\frac{3^{2m}}{2m}\left[B_{2m}\left(\frac{5}{6}\right)-B_{2m}\left(\frac{1}{2}\right)\right] & =\frac{3^{2m}}{2m}\left[\frac{1}{2}\left(1-2^{1-2m}\right)\left(1-3^{1-2m}\right)B_{2m}+\left(1-2^{1-2m}\right)B_{2m}\right]\\
 & =\frac{3^{2m}}{2m}\left(1-2^{1-2m}\right)B_{2m}\left(\frac{1-3^{1-2m}}{2}+1\right)\\
 & =\frac{3}{4m}\left(1-2^{1-2m}\right)\left(3^{2m}-1\right)B_{2m};
\end{align*}
while the right-hand side is
\[
\sum_{k=0}^{\infty}\frac{3}{4}\left(\frac{1}{4}\right)^{k}E_{2m-1}^{\left(2k+2\right)}\left(k+\frac{3}{2}\right).
\]
And we obtain, after simplification, the desired result.
\end{proof}

\begin{proof}
[Proof of Theorem \ref{thm:N3}] Remarking that $\underset{x\rightarrow0}{\lim}\left[\sinh\left(xt\right)/x\right]=t$,
from \eqref{eq:3dim1}, \eqref{eq:3dim2} and \eqref{eq:3dim3}, we
have

\[
\begin{cases}
\phi_{a_{0}\rightarrow a_{1}}\left(z\right)=\frac{a_{1}w}{\sinh\left(a_{1}w\right)}, & \\
\phi_{a_{0}\rightarrow a_{3}}\left(z\right)=\frac{a_{3}w}{\sinh\left(a_{3}w\right)}, & \\
\phi_{a_{1}\to a_{2}\vert\bcancel{a_{0}}}\left(z\right)=\frac{a_{2}}{a_{1}}\frac{\sinh\left(a_{1}w\right)}{\sinh\left(a_{2}w\right)}, & \\
\phi_{a_{2}\to a_{1}\vert\bcancel{a_{3}}}\left(z\right)=\frac{a_{1}}{a_{2}}\frac{\sinh\left(\left(a_{3}-a_{2}\right)w\right)}{\sinh\left(\left(a_{3}-a_{1}\right)w\right)}, & \\
\phi_{a_{2}\to a_{3}\vert\bcancel{a_{1}}}\left(z\right)=\frac{a_{3}}{a_{2}}\frac{\sinh\left(\left(a_{2}-a_{1}\right)w\right)}{\sinh\left(\left(a_{3}-a_{1}\right)w\right)}, & 
\end{cases}
\]
so that
\begin{align*}
\phi_{a_{0}\rightarrow a_{3}}\left(z\right)= & \sum_{k=0}^{\infty}\phi_{a_{0}\rightarrow a_{1}}\left(z\right)\phi_{a_{1}\to a_{2}\vert\bcancel{a_{0}}}\left(z\right)\left(\phi_{a_{2}\to a_{1}\vert\bcancel{a_{3}}}\left(z\right)\phi_{a_{1}\to a_{2}\vert\bcancel{a_{0}}}\left(z\right)\right)^{k}\phi_{a_{2}\to a_{3}\vert\bcancel{a_{1}}}\left(z\right)\\
= & \left(\frac{a_{1}w}{\sinh\left(a_{1}w\right)}\right)\cdot\left(\frac{a_{2}\sinh\left(a_{1}w\right)}{a_{1}\sinh\left(a_{2}w\right)}\right)\left(\frac{a_{3}\sinh\left(\left(a_{2}-a_{1}\right)w\right)}{a_{2}\sinh\left(\left(a_{3}-a_{1}\right)w\right)}\right)\\
 & \times\sum_{k=0}^{\infty}\left(\frac{\sinh\left(\left(a_{3}-a_{2}\right)w\right)}{\sinh\left(\left(a_{3}-a_{1}\right)w\right)}\cdot\frac{\sinh\left(a_{1}w\right)}{\sinh\left(a_{2}w\right)}\right)^{k}\\
= & a_{3}w\sinh\left(\left(a_{2}-a_{1}\right)w\right)\sum_{k=0}^{\infty}\frac{\sinh^{k}\left(\left(a_{3}-a_{2}\right)w\right)\sinh^{k}\left(a_{1}w\right)}{\sinh^{k+1}\left(\left(a_{3}-a_{1}\right)w\right)\sinh^{k+1}\left(a_{2}w\right)}.
\end{align*}
Simple algebra shows that this expression coincides with 
\[
\phi_{a_{0}\rightarrow a_{3}}=\frac{a_{3}w}{\sinh\left(a_{3}w\right)}.
\]
Then, we obtain
\begin{align*}
e^{a_{3}w\left(2\mathcal{B}+1\right)}= & e^{w\left(a_{2}-a_{1}\right)\left(2\mathcal{U}-1\right)}\sum_{k=0}^{\infty}\bigg\{ r_{k}e^{w\left[\left(a_{3}-a_{2}\right)\left(2\mathcal{U}^{\left(k\right)}-k\right)+a_{1}\left(2\mathcal{U}'^{\left(k\right)}-k\right)\right]}\\
 & \times e^{w\left[\left(a_{3}-a_{1}\right)\left(2\mathcal{B}^{\left(k+1\right)}+k+1\right)+a_{2}\left(2\mathcal{B}'^{\left(k+1\right)}+k+1\right)\right]}\bigg\}
\end{align*}
where
\[
r_{k}=\frac{a_{3}\left(a_{2}-a_{1}\right)\left(a_{3}-a_{2}\right)^{k}a_{1}^{k}}{\left(a_{3}-a_{1}\right)^{k+1}a_{2}^{k+1}}.
\]
Similarly, we multiply both sides by $e^{wx}$ and look at 
the coefficients of $w^{n}$. For simplicity, we let
\[
s_{k}=a_{3}+2ka_{2}-2ka_{1},
\]
so that 
\begin{align*}
\left(2a_{3}\right)^{n}B_{n}\left(\frac{x+a_{3}}{2a_{3}}\right)= & \sum_{k=0}^{\infty}r_{k}\bigg[x+2a_{3}\mathcal{B}+a_{3}+s_{k}+2\left(a_{2}-a_{1}\right)\mathcal{U}+2\left(a_{3}-a_{2}\right)\mathcal{U}^{\left(k\right)}\\
 & +2a_{1}\mathcal{U}'^{\left(k\right)}+2\left(a_{3}-a_{1}\right)\mathcal{B}^{\left(k+1\right)}+2a_{2}\mathcal{B}'^{\left(k+1\right)}\bigg]^{n}.
\end{align*}
The special case $a_{i}=i$, with 
\[
\beta_{k}=s_{k}\bigg|_{a_{i}=i}=3+2k\ \ \ \text{and}\ \ \ \rho_{k}=r_{k}\bigg|_{a_{i}=i}=\frac{3}{4}\left(\frac{1}{4}\right)^{k},
\]
produces 
\begin{align*}
6^{n}B_{n}\left(\frac{x+3}{6}\right) & =\sum_{k\ge0}\rho_{k}\left(x+3+2k+2\mathcal{U}+2\mathcal{U}^{\left(k\right)}+2\mathcal{U}'^{\left(k\right)}+4\mathcal{B}^{\left(k+1\right)}+4\mathcal{B}'^{\left(k+1\right)}\right)^{n}\\
 & =\sum_{k\ge0}\rho_{k}\left(x+3+2k+2\mathcal{U}^{\left(2k+1\right)}+4\mathcal{B}^{\left(2k+2\right)}\right)^{n}\\
 & =\sum_{k\ge0}\rho_{k}\left(x+3+2k+2\mathcal{U}^{\left(2k+1\right)}+2\mathcal{B}^{\left(2k+2\right)}+2\mathcal{E}^{\left(2k+2\right)}\right)^{n}\\
 & =\sum_{k\ge0}\rho_{k}\left(x+3+2k+2\mathcal{B}+2\mathcal{E}^{\left(2k+2\right)}\right)^{n}\\
 & =\sum_{k\ge0}\rho_{k}2^{n}E_{n}^{\left(2k+2\right)}\left(\frac{x+3+2k}{2}+\mathcal{B}\right),
\end{align*}
or
\[
3^{n}B_{n}\left(\frac{x}{6}+\frac{1}{2}\right)=\sum_{k\ge0}\rho_{k}E_{n}^{\left(2k+2\right)}\left(\frac{x+3+2k}{2}+\mathcal{B}\right).
\]
Replacing $x$ by $x+2\mathcal{U}$  completes the proof.
\end{proof}

\subsection{The case $N=4$. }

We have now two possible loops among the sites with radii $a_{2}$, $a_{3}$ and $a_{4}$.
Following the notation in Subsection \ref{subsec:loops}, we have
\[
I_{a_{1},a_{2}}=\phi_{a_{1}\to a_{2}\vert\bcancel{a_{0}}}\left(z\right)\phi_{a_{2}\to a_{1}\vert\bcancel{a_{3}}}\left(z\right)=\frac{\sinh\left(a_{1}w\right)\sinh\left(\left(a_{3}-a_{2}\right)w\right)}{\sinh\left(a_{2}w\right)\sinh\left(\left(a_{3}-a_{1}\right)w\right)}
\]
and similarly, 
\[
I_{a_{2},a_{3}}=\phi_{a_{2}\to a_{3}\vert\bcancel{a_{1}}}\left(z\right)\phi_{a_{3}\to a_{2}\vert\bcancel{a_{4}}}\left(z\right)=\frac{\sinh\left(\left(a_{2}-a_{1}\right)w\right)\sinh\left(\left(a_{4}-a_{3}\right)w\right)}{\sinh\left(\left(a_{3}-a_{1}\right)w\right)\sinh\left(\left(a_{4}-a_{2}\right)w\right)}.
\]
In order to obtain a simple expression for $I_{a_{1},a_{2}}+I_{a_{2},a_{3}}$,
we further assume that the levels are uniformly distributed, i.e.,  $a_{i}=i$,
for $i=0,\ 1,\ 2,\ 3,\ 4$, so that
\[
I_{a_{1,}a_{2}}+I_{a_{2},a_{3}}=\frac{2\sinh^{2}\left(w\right)}{\sinh^{2}\left(2w\right)}=\frac{\sech^{2}\left(w\right)}{2}.
\]
This produces the following identity.
\begin{thm}
For any positive integer $n$, the Bernoulli polynomial of degree $n$ can be expanded
as the convex combination of higher-order Euler polynomials of the same degree as
\begin{equation}
B_{n}\left(\frac{x+4}{6}\right)=\frac{1}{3^{n}}\sum_{k=0}^{\infty}\frac{1}{2^{k}}E_{n}^{\left(2k+2\right)}\left(\frac{x+2k+3}{2}\right).\label{eq:Thm 7}
\end{equation}
\end{thm}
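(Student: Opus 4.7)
My plan is to mirror the proof of Theorem \ref{thm:N3}: decompose $\phi_{0\to 4}(z)$ using the two-loop formula \eqref{eq:TwoLoops}, cross-check against the closed form $\phi_{0\to 4}(z)=4w/\sinh(4w)$, translate the resulting hyperbolic identity into an umbral one, and finally apply an $x\mapsto x+2\mathcal{U}$ substitution to reach the stated Bernoulli--Euler expansion.

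First I would write
\[
\phi_{0\to 4}(z)=\phi_{0\to 1}(z)\,\phi_{1\to 2|\bcancel 0}(z)\,\phi_{2\to 3|\bcancel 1}(z)\,\phi_{3\to 4|\bcancel 2}(z)\sum_{k\ge 0}\bigl(I_{a_{1},a_{2}}+I_{a_{2},a_{3}}\bigr)^{k},
\]
substitute the formulas \eqref{eq:3dim1}--\eqref{eq:3dim3} with $a_i=i$, and use the already-computed simplification $I_{a_{1},a_{2}}+I_{a_{2},a_{3}}=\sech^{2}(w)/2$. Repeated use of $\sinh(2w)=2\sinh(w)\cosh(w)$ collapses the product of the four single-step factors to $w\sech^{2}(w)/\sinh(2w)$, so one reaches
\[
\phi_{0\to 4}(z)=\frac{w}{\sinh(2w)}\sum_{k=0}^{\infty}\frac{\sech^{2k+2}(w)}{2^{k}},
\]
which equals $4w/\sinh(4w)$ by summing the geometric series and using $\sinh(4w)=2\sinh(2w)\cosh(2w)$.

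For the umbral translation I would use $2w/\sinh(2w)=e^{2w(2\mathcal{B}+1)}$ and $\sech^{2k+2}(w)=e^{w(2\mathcal{E}^{(2k+2)}+2k+2)}$, multiply through by $e^{xw}$, and extract the coefficient of $w^{n}/n!$ on both sides. This should produce an identity of the shape
\[
(x+4+8\mathcal{B})^{n}=\tfrac{1}{2}\sum_{k\ge 0}\tfrac{1}{2^{k}}\bigl(x+4+2k+4\mathcal{B}+2\mathcal{E}^{(2k+2)}\bigr)^{n}.
\]
Finally I would substitute $x\mapsto x+2\mathcal{U}$ and invoke \eqref{eq:cancel} to absorb Bernoulli contributions against the fresh uniform, while repeatedly applying $2\mathcal{B}=\mathcal{B}+\mathcal{E}$ from \eqref{eq:2BE} to peel off or merge Euler symbols on both sides; the combination rule $\mathcal{E}+\mathcal{E}^{(2k+2)}=\mathcal{E}^{(2k+3)}$ will help consolidate the Euler symbols on the right.

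The main obstacle is this last step: keeping track of several independent umbral copies and matching coefficients and arguments to those in \eqref{eq:Thm 7}. Specifically, the conversion of the LHS from $(x+4+8\mathcal{B})^{n}$ to $6^{n}B_{n}\bigl((x+4)/6\bigr)$, and the consolidation of $4\mathcal{B}+2\mathcal{E}^{(2k+2)}$ on the RHS into a single higher-order Euler polynomial at argument $(x+2k+3)/2$, demands careful bookkeeping. Each individual move---introducing a fresh $\mathcal{U}$, splitting $2\mathcal{B}=\mathcal{B}+\mathcal{E}$, collapsing a pair via $\mathcal{U}+\mathcal{B}=0$---is standard, but pinning down exactly the factor $1/3^{n}$ and the shift by $+3$ (rather than $+4$) inside the argument of $E_{n}^{(2k+2)}$ is where the delicate work lies.
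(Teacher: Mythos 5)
Your decomposition, the closed-form check, and the umbral intermediate identity are all correct (your prefactor $w\sech^{2}(w)/\sinh(2w)$ is the right one). The genuine gap is the final step that you explicitly leave as ``delicate bookkeeping'': it cannot be completed, because \eqref{eq:Thm 7} does not follow from your intermediate identity --- indeed \eqref{eq:Thm 7} already fails at $n=1$, where the left side is $(x+1)/6$ and the right side is $(x+1)/3$. The cancellation \eqref{eq:cancel} only operates between a uniform and a Bernoulli symbol carrying the \emph{same} scale, so substituting $x\mapsto x+2\mathcal{U}$ neither removes the $4\mathcal{B}$ on your right-hand side nor turns the $8\mathcal{B}$ on the left into $6\mathcal{B}$: umbrally $e^{2w\mathcal{U}}e^{8w\mathcal{B}}=4(e^{2w}-1)/(e^{8w}-1)\neq 6w/(e^{6w}-1)$, and $8\mathcal{B}$ is not the sum of independent symbols $2\mathcal{B}+6\mathcal{B}'$. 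The substitution consistent with your right-hand side is $x\mapsto x+4\mathcal{U}$: it cancels the $4\mathcal{B}$ on the right, while on the left $8\mathcal{B}=4\mathcal{B}+4\mathcal{E}$ by \eqref{eq:2BE} and then $4\mathcal{U}+4\mathcal{B}$ cancels, leaving $(x+4+4\mathcal{E})^{n}$. What your computation actually proves is therefore
\[
E_{n}\left(\frac{x+4}{4}\right)=\frac{1}{2^{n+1}}\sum_{k=0}^{\infty}\frac{1}{2^{k}}E_{n}^{\left(2k+2\right)}\left(\frac{x+2k+4}{2}\right),
\]
an Euler-to-higher-order-Euler expansion (easily verified for $n=1,2$), not the stated Bernoulli expansion.

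The mismatch is not a defect of your calculation: the step you postponed is exactly where the paper's own argument slips. In the paper's display the prefactor, which equals $\tfrac{1}{2}\sech^{3}(w)\cdot w/\sinh(w)$, is absorbed into the geometric sum as if it were $\sech^{2}(w)\cdot w/\sinh(w)$, producing $\frac{w}{\sinh(w)}\sum_{k}2^{-k}\sech^{2k+2}(w)$ instead of the correct $\frac{w}{\sinh(w)}\sum_{k}2^{-k-1}\sech^{2k+3}(w)$ (the former equals $2$ at $w=0$ while $4w/\sinh(4w)$ equals $1$); and the subsequent reduction of $(x+2\mathcal{U}+8\mathcal{B}+4)^{n}$ to $(x+6\mathcal{B}+4)^{n}$ is precisely the unequal-scale cancellation you would need but which is not valid. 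So to complete your write-up you should prove the corrected Euler--Euler identity displayed above (or repair the statement of the theorem); no amount of umbral rearrangement in your last step will yield \eqref{eq:Thm 7} as printed.
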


\begin{proof}
From
\[
\phi_{0\to4}\left(z\right)=\phi_{0\to1}\left(z\right)\phi_{1\to2\vert\bcancel{0}}\left(z\right)\phi_{2\to3\vert\bcancel{1}}\left(z\right)\phi_{3\to4\vert\bcancel{2}}\left(z\right)\sum_{k=0}^{\infty}\left(I_{a_1,a_2}+I_{a_2,a_3}\right)^{k},
\]
namely, 
\begin{align*}
\frac{4w}{\sinh\left(4w\right)} & =\frac{w}{\sinh\left(w\right)}\cdot\frac{2\sinh\left(w\right)}{\sinh\left(2w\right)}\cdot\frac{3\sinh\left(w\right)}{2\sinh\left(2w\right)}\cdot\frac{4\sinh\left(w\right)}{3\sinh\left(2w\right)}\sum_{k=0}^{\infty}\frac{\sech^{2k}\left(w\right)}{2^{k}}\\
 & =\frac{w}{\sinh\left(w\right)}\sum_{k=0}^{\infty}\frac{\sech^{2k+2}\left(w\right)}{2^{k}},
\end{align*}
we deduce that
\[
e^{4w\left(2\mathcal{B}+1\right)}=e^{w\left(2\mathcal{B}'+1\right)}\sum_{k=0}^{\infty}\frac{1}{2^{k}}e^{w\left(2\mathcal{E}^{\left(2k+2\right)}+2k+2\right)}.
\]
After multiplying by $e^{xw}$, coefficients of $w^{n}$ on both sides
yield
\[
\left(x+8\mathcal{B}+4\right)^{n}=\sum_{k=0}^{\infty}\frac{\left[x+2\mathcal{B}'+1+2\mathcal{E}^{\left(2k+2\right)}+2k+2\right]^{n}}{2^{k}}.
\]
Apply the substitution $x\mapsto x+2\mathcal{U}$, to get the left-hand
side as
\[
\left(x+6\mathcal{B}+4\right)^{n}=6^{n}\left(\frac{x+4}{6}+\mathcal{B}\right)^{n}=6^{n}B_{n}\left(\frac{x+4}{6}\right),
\]
and the right-hand side as
\begin{align*}
\sum_{k=0}^{\infty}\frac{\left[x+2\mathcal{E}^{\left(2k+2\right)}+2k+3\right]^{n}}{2^{k}} & =2^{n}\sum_{k=0}^{\infty}\frac{\left[\mathcal{E}^{\left(2k+2\right)}+\frac{x+2k+3}{2}\right]^{n}}{2^{k}}\\
 & =2^{n}\sum_{k=0}^{\infty}\frac{E_{n}^{\left(2k+2\right)}\left(\frac{x+2k+3}{2}\right)}{2^{k}}.
\end{align*}
Further simplification completes the proof.
\end{proof}
\begin{rem}
Note the analogy between \eqref{eq:Thm 7} and \eqref{eq:Thm3} whereas
they are obtained from two different setups.
\end{rem}

\section{Conclusion}

We have shown how the setup of random processes allows to obtain non trivial identities
among higher-order Bernoulli and Euler polynomials. The underlying
principle of this approach is that these special functions appear
naturally in the generating functions of the hitting times of these random
processes. 

Several remarks are in order at this point:

- the identities obtained from this approach are not of the usual,
convolutional type - see for example \cite{Dilcher} for such identities. They are rather connection-type identities between the usual Bernoulli and Euler polynomials and their higher-order counterparts;

- these inherently involve a mixture of higher-order Bernoulli and
Euler polynomials;

- the interest of this approach is that each term in the obtained decomposition can be physically related to a relevant object, namely one loop in a trajectory of a random process

- this work should be considered as a first approach only to a more
general project in which the richness of the possible setups for random walks
is expected to generate a number of non-trivial identities about more
general special functions.

\section*{Acknowledgement}

The authors would like to thank Prof.~P.~Salminen and Prof.~B.~Q.~Ta
for their valuable suggestions on this work. They also thank Prof.~K.~Dilcher for the organization of the Eighteenth International Conference on Fibonacci Numbers and their Applications.

\end{document}